\documentclass{article}

\usepackage[letterpaper, margin=1in]{geometry}
\usepackage{amsmath,amssymb}
\usepackage{enumerate}
\usepackage{booktabs}
\usepackage[amsmath,thmmarks,framed]{ntheorem}
\usepackage{authblk}
\usepackage{xcolor}
\definecolor{darkgreen}{rgb}{0.0,0.5,0.0}
\usepackage[colorlinks,linkcolor=red,citecolor=darkgreen,urlcolor=blue]{hyperref}
\hypersetup{
  pdftitle={Classification of the orbits of three quadratic Collatz-type maps},
  pdfauthor={Xuda Ye}
}
\usepackage[numbers,square,comma,sort&compress]{natbib}

\newtheorem{theorem}{Theorem}
\newtheorem{lemma}{Lemma}
\newtheorem{corollary}{Corollary}
\newtheorem{remark}{Remark}
\newenvironment{proof}[1][Proof]{\par\medskip\noindent\textit{#1.}\ }{\hfill$\square$\par\medskip}

\newcommand{\Le}{\leqslant}
\newcommand{\Ge}{\geqslant}
\newcommand{\ord}{\operatorname{ord}}

\title{Classification of the Orbits of Three Quadratic Collatz-type Maps}
\author{Xuda Ye}
\affil{Department of Mathematics, Purdue University, West Lafayette, IN 47907}
\date{\today}

\begin{document}
\maketitle

\begin{abstract}
We study three quadratic Collatz-type maps on the non-negative integers. Each of them halves an even number, and they send an odd $n$ to $n(n-1)/2$, to $n(n+1)/2$ and to $(n^2 - 1)/4$, respectively.  Along an orbit, the odd terms grow until they reach a number of the form $2^m + 1$ or $2^m - 1$, depending on the map, and an orbit with an odd term greater than $1$ is bounded exactly when this happens. The step that reaches such a number gives a solution of an exponential Diophantine equation, which becomes the equation $2^a \pm 2^b + 1 = z^2$ after completing the square. With Szalay's classification of the solutions of this equation, we classify all orbits of the maps. Every orbit either is eventually periodic or tends to infinity, and we determine exactly the initial values with bounded orbits. For the map with odd branch $n(n-1)/2$, Sedaghat stated the classification, and we give a new and complete proof. For this map, we also give an elementary criterion for unbounded orbits, based on divisibility alone. To our knowledge, the classifications for the other two maps are new.
\end{abstract}

\textbf{Keywords:} Collatz-type map, quadratic map, bounded orbit, exponential Diophantine equation, multiplicative order.

\textbf{2020 MSC:} 11B83, 11D61, 11A07.

\section{Introduction}
\label{sec: intro}

On the set $\mathbb N_0 = \{0, 1, 2, \dots\}$ of non-negative integers, consider the maps
\begin{equation}
    Q_-(n) =
    \begin{cases}
        \dfrac n2 , & n \text{ even} , \\[6pt]
        \dfrac{n(n-1)}{2} , & n \text{ odd} ,
    \end{cases}
    \qquad
    Q_+(n) =
    \begin{cases}
        \dfrac n2 , & n \text{ even} , \\[6pt]
        \dfrac{n(n+1)}{2} , & n \text{ odd} ,
    \end{cases}
    \label{eq: maps}
\end{equation}
whose odd branches are the binomial coefficients $\binom n2$ and $\binom{n+1}2$, and the symmetric map
\begin{equation}
    S(n) =
    \begin{cases}
        \dfrac n2 , & n \text{ even} , \\[6pt]
        \dfrac{n^2 - 1}{4} , & n \text{ odd} .
    \end{cases}
    \label{eq: map S}
\end{equation}
The orbit $(x_i)_{i \Ge 0}$ of an initial value $x_0 \in \mathbb N_0$ under $Q_-$ is defined by $x_{i+1} = Q_-(x_i)$, so that $x_i = Q_-^i(x_0)$, where $Q_-^i$ denotes the $i$-fold composition of $Q_-$. The orbits under $Q_+$ and $S$ and the compositions $Q_+^i$ and $S^i$ are defined in the same way. For $x_0 \Ge 1$, we write $x_0 = 2^l q$ with $l \Ge 0$ and $q$ odd, and call $q$ the odd part of $x_0$. In its compressed form, the Collatz map sends an even $n$ to $n/2$ and an odd $n$ to $(3n+1)/2$. As in the study of the Collatz map, we ask which initial values have bounded orbits, where these orbits end, and whether all other orbits tend to infinity.

Our main results are the following theorems. They characterize all orbits of the Collatz-type maps $Q_-$, $Q_+$ and $S$ in \eqref{eq: maps} and \eqref{eq: map S}.
\clearpage

\begin{theorem}
\label{thm: minus}
    Let $x_0 \in \mathbb N_0$, and let $(x_i)_{i \Ge 0}$ be its orbit under $Q_-$.
    \begin{enumerate}[(i)]
        \setlength{\itemsep}{0pt}
        \item If $x_0 = 0$ or $x_0 = 2^l$ with $l \Ge 0$, the orbit reaches the fixed point $0$.
        \item If $x_0 = 2^l (2^m + 1)$ with $l \Ge 0$ and $m \Ge 1$, the orbit enters the cycle of length $m$ that contains $2^m + 1$.
        \item For every other $x_0$, the orbit satisfies $x_i \to \infty$ as $i \to \infty$.
    \end{enumerate}
\end{theorem}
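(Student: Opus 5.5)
The plan is to track only the odd terms of the orbit, show that they form a non-decreasing multiplicative chain, and reduce the question of whether an orbit is bounded to a single exponential Diophantine equation.

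\emph{Odd terms.} The case $x_0=0$ is immediate. For $x_0=2^l q$ with $q$ odd, the orbit reaches $q$ after $l$ halvings. If $q=1$, then $Q_-(1)=0$, which gives (i). For odd $n\Ge 3$ write $n-1=2^k r$ with $k\Ge 1$ and $r$ odd. Then
\[
Q_-(n)=2^{k-1}nr ,
\]
and $k-1$ further halvings lead to the odd number $nr$. So the successive odd terms satisfy $q_{j+1}=q_j r_j$, where $r_j$ is the odd part of $q_j-1$. Every term of the orbit lying between $q_j$ and $q_{j+1}$ is at least $q_j$.

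If $r_j=1$, that is $q_j=2^m+1$, then $q_{j+1}=q_j$. The orbit is periodic: $2^m+1\mapsto 2^{m-1}(2^m+1)$ is followed by $m-1$ halvings, so the cycle has length $m$. This proves (ii) once we note that $q=2^m+1$ is reached from $x_0=2^l(2^m+1)$.

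If $r_j\Ge 3$ for all $j$, then $q_j\Ge 3^j q_0\to\infty$. Since the intermediate terms are bounded below by the current odd term, $x_i\to\infty$.

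\emph{Key step.} It remains to show that when $q\Ge 3$ is not of the form $2^m+1$, no later odd term is of that form. Since the chain is non-decreasing and stabilizes only at such numbers, it suffices to rule out one step $n\mapsto nr=2^m+1$ where $n\Ge 3$ is odd, $n-1=2^k r$, and $r\Ge 3$ (equivalently, $n$ is not $2^k+1$). Multiplying $nr=2^m+1$ by $2^k$ gives
\[
n(n-1)=2^k(2^m+1),
\]
and completing the square yields
\[
(2n-1)^2=2^{k+m+2}+2^{k+2}+1 .
\]
This is $z^2=2^a+2^b+1$ with $a=k+m+2>b=k+2\Ge 3$.

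\emph{Applying Szalay.} I will invoke Szalay's classification of the solutions of this equation: the infinite family $b=a/2+1$, $z=2^{a/2}+1$, together with finitely many sporadic solutions, namely $(a,b,z)=(5,4,7)$ and $(9,4,23)$, with any small-exponent cases handled directly.

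For the family, $b=a/2+1$ forces $k=m$ and $2n-1=2^{k+1}+1$, so $n=2^k+1$, which is excluded.

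For both sporadic solutions $b=4$, so $k=2$, and $z$ gives $n=4$ and $n=12$ respectively. Both are even, contradicting $n$ odd (equivalently $n\equiv 1 \pmod 4$ when $k=2$).

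Hence the step $n\mapsto 2^m+1$ is impossible, and (iii) follows.

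The main obstacle is this Diophantine step. The care lies in correctly citing Szalay's list, including boundary cases with small $b$ or $a\le b$, and in checking each sporadic solution against the parity and 2-adic constraints on $n$. The rest is bookkeeping on the halvings.
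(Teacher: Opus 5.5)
Your proposal is correct and follows essentially the same route as the paper. Your decomposition $n-1=2^k r$, with next odd term $nr$, is the paper's $y=2^jk+1$ with $y'=k(2^jk+1)$ (Lemma~\ref{lem: odd step}), and you then complete the first step into a number $2^m+1$ to the square $(2^{k+1}r+1)^2=2^{k+m+2}+2^{k+2}+1$ and eliminate it with Szalay's list exactly as in Lemma~\ref{lem: key}~(i). Your hedge about ``small-exponent cases'' is unnecessary, since $b=k+2\Ge 3$ and $a>b$ already lie within the range of Szalay's classification.
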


\begin{theorem}
\label{thm: plus}
    Let $x_0 \in \mathbb N_0$, and let $(x_i)_{i \Ge 0}$ be its orbit under $Q_+$.
    \begin{enumerate}[(i)]
        \setlength{\itemsep}{0pt}
        \item If $x_0 = 0$, the orbit stays at the fixed point $0$.
        \item If $x_0 = 2^l (2^m - 1)$ with $l \Ge 0$ and $m \Ge 1$, the orbit enters the cycle of length $m$ that contains $2^m - 1$, which for $m = 1$ is the fixed point $1$.
        \item If $x_0 = 5 \cdot 2^l$ with $l \Ge 0$, the orbit enters the cycle $15 \to 120 \to 60 \to 30 \to 15$ through $15 = 2^4 - 1$.
        \item For every other $x_0$, the orbit satisfies $x_i \to \infty$ as $i \to \infty$.
    \end{enumerate}
\end{theorem}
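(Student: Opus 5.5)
We prove Theorem~\ref{thm: plus}; it is the statement immediately above. The plan is to track the odd terms of the orbit.

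\textbf{Monotonicity of odd terms.} Write an odd term as $n = 2^m - 1 + r$. Then
\[
Q_+(n) = \frac{n(n+1)}{2} = n \cdot \frac{n+1}{2}.
\]
Let $\nu = v_2(n+1)$. After $\nu - 1$ halvings of $n(n+1)/2$ we reach the next odd term
\[
n' = n \cdot \frac{n+1}{2^\nu}.
\]
So the map on odd terms is $n \mapsto n \cdot \operatorname{odd}(n+1)$, and $n' \Ge n$, with equality exactly when $n+1$ is a power of $2$.

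\textbf{Bounded case.} If the orbit is bounded, the odd terms are nondecreasing and bounded, so they are eventually constant. That forces some odd term to equal $2^m - 1$. Conversely, starting from $2^m - 1$ we get $(2^m - 1)2^{m-1}$, then halve $m-1$ times back to $2^m - 1$. This is a cycle of length $m$, and for $m = 1$ it is the fixed point $1$.

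If no odd term is ever of the form $2^m - 1$, the odd terms strictly increase. Each new odd term is at least $3$ times the previous one once $n > 1$, since $\operatorname{odd}(n+1) \Ge 3$. So the odd terms tend to infinity. Every term $x_i$ is at least the next odd term in the orbit, because between odd terms we only halve. Hence $x_i \to \infty$. This proves the dichotomy. Note also that $0$ is fixed and that $2^l q$ reaches $q$.

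\textbf{Classifying the bounded orbits.} It remains to decide which odd $q$ eventually reach some $2^m - 1$. Suppose an odd $n$ that is not of the form $2^k - 1$ satisfies
\[
n \cdot \frac{n+1}{2^\nu} = 2^m - 1.
\]
Multiplying out gives $n^2 + n = 2^{m+\nu} - 2^\nu$. Completing the square gives
\[
(2n+1)^2 = 2^{m+\nu+2} - 2^{\nu+2} + 1,
\]
which has the shape $2^a - 2^b + 1 = z^2$. Szalay's classification should list all solutions. The trivial families correspond to $n = 2^k - 1$ and are excluded by hypothesis. I expect the sporadic solution to be $n = 5$: here $5 \cdot 3 = 15$, and indeed $(11)^2 = 121 = 2^7 - 2^3 + 1$, with $\nu = 1$ and $m = 4$.

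Next we need all odd $q$ whose odd-term sequence hits $5$ or a Mersenne number. Hitting $5$ itself requires a predecessor $n < 5$ with $n \cdot \operatorname{odd}(n+1) = 5$, and none exists, since $1 \mapsto 1$ and $3 \mapsto 3$. Similarly, reaching a Mersenne number through a non-Mersenne predecessor only happens via $5$. Therefore the bounded odd $q$ are exactly the $2^m - 1$ and $5$, which gives items (ii) and (iii). Item (i) is immediate.

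\textbf{Main obstacle.} The hard step is correctly invoking Szalay's theorem. We must check every listed solution of $2^a - 2^b + 1 = z^2$, including the small exceptional ones, and map each back to a valid odd $n$ with $\nu \Ge 1$. Then we verify that $n = 5$ is the only non-Mersenne preimage.
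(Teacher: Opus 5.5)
Your proposal is correct and follows the paper's route. Nondecreasing odd terms give the dichotomy, and completing the square turns $n(n+1)/2^\nu = 2^m - 1$ into $(2n+1)^2 = 2^{m+\nu+2} - 2^{\nu+2} + 1$, which is the paper's identity with $z = 2^{j+1}k - 1 = 2n+1$. Szalay's part~(ii) then leaves only $n = 5$: the family gives the Mersenne values $n = 2^{u-1} - 1$, and the sporadic solutions $(5,3,5)$ and $(15,3,181)$ give the even values $n = 2$ and $n = 90$, so the check you left hedged does go through.

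The one real difference is how you exclude earlier predecessors of $5$. The paper uses that $q$ divides every term of a $Q_+$-orbit, so $q \mid 5$. Your argument works equally well: an odd predecessor of the non-Mersenne number $5$ must be smaller than $5$, hence one of the fixed points $1$ or $3$.
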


\begin{theorem}
\label{thm: S}
    Let $x_0 \in \mathbb N_0$, and let $(x_i)_{i \Ge 0}$ be its orbit under $S$.
    \begin{enumerate}[(i)]
        \setlength{\itemsep}{0pt}
        \item If $x_0 = 0$, or if the odd part of $x_0$ is of the form $2^m \pm 1$ with $m \Ge 1$ or lies in $\{11, 23, 181\}$, the orbit reaches the fixed point $0$.
        \item For every other $x_0$, the orbit satisfies $x_i \to \infty$ as $i \to \infty$.
    \end{enumerate}
\end{theorem}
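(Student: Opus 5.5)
The plan is to track only the odd terms of the orbit and show that their size strictly increases except at numbers of the form $2^m\pm1$.

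\emph{Odd-part recursion.} Let $n = 2k+1 \Ge 3$ be odd. Then
\[
S(n) = \frac{n-1}{2}\cdot\frac{n+1}{2} = k(k+1),
\]
a product of two consecutive integers. One of them, call it $u$, is odd; the other is $2^c v$ with $c \Ge 1$ and $v$ odd. The odd part of $S(n)$ is therefore $uv$.

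\begin{itemize}
\item If $v \Ge 3$, then $uv \Ge 3k > 2k+1 = n$.
\item If $v = 1$, then $\{k, k+1\}$ contains a power of $2$. This forces $n = 2^{m}\pm1$ with $m = c+1$, and the odd part of $S(n)$ is then $2^{m-1}\pm1$ with the same sign.
\end{itemize}

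Consequently the chain $2^m\pm1 \to \dots \to 3 \text{ or } 1$ descends; since $S(3)=2\to1\to0$ and $1\to0$, it reaches $0$. This proves part (i) for the families $2^m\pm1$. The even steps only halve down to the odd part, so the same holds for $x_0 = 2^l(2^m\pm1)$.

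\emph{Dichotomy.} For $x_0 \Ge 1$, every orbit visits odd terms repeatedly until possibly reaching $0$. Let $q_0 < q_1 < \cdots$ be the successive odd terms, i.e.\ the odd parts after each odd step. By the recursion, they strictly increase as long as no $q_j$ has the form $2^m\pm1$. If no $q_j$ has that form, then $q_j \to \infty$. Moreover every $x_i$ after the odd term $q_j$ is at least $q_j$, because the intermediate terms are $2^s q_{j+1}$ with $q_{j+1} > q_j$. Hence $x_i\to\infty$.

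So the orbit is bounded iff some odd term is $1$ or of the form $2^m\pm1$. It remains to determine all odd $n$, not themselves of this form, whose orbit reaches such a number. By the recursion, this means finding the odd $n$ with $v\Ge3$ and $uv = 2^m\pm1$, and then taking preimages of preimages.

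\emph{Diophantine step.} For a first preimage, $S(n) = k(k+1) = 2^c(2^m\pm1)$ with $c\Ge1$ and $m\Ge1$. Completing the square gives
\[
n^2 = (2k+1)^2 = 2^{c+m+2} \pm 2^{c+2} + 1.
\]
This is exactly Szalay's equation $2^a\pm2^b+1 = z^2$. I would quote his classification: the parametric families plus finitely many sporadic solutions. The parametric families should correspond to $z = 2^{m'}\pm1$ themselves, i.e.\ the case $v=1$ already handled. The sporadic solutions should give $z\in\{11,23,181\}$. Checking these directly:
\[
S(11)=30\to15=2^4-1,\qquad S(23)=132\to33=2^5+1,\qquad S(181)=8190\to4095=2^{12}-1.
\]

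\emph{No second-level preimages.} Suppose $t\in\{11,23,181\}$ and the odd part of $S(n)$ equals $t$. Then $uv=t$ with $v\Ge3$ (since $v=1$ would put $n$ in the $2^m\pm1$ family). As $11$, $23$ and $181$ are prime, we get $u=1$, $v=t$. But $u=1$ forces $k\Le2$, which is impossible. Hence the bounded set is exactly as stated in (i), and everything else falls under (ii).

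The main obstacle is the Diophantine step: matching Szalay's list precisely, covering both signs and the degenerate parameter ranges such as $a=b$, $b=0$, or small $c$. I would handle this by listing Szalay's solutions explicitly and reducing them under the constraints $b=c+2\Ge3$ and $a-b=m\Ge1$.
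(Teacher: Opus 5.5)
Your proposal is correct and follows the paper's proof. Your factorization $S(n)=k(k+1)=u\cdot 2^c v$ is the paper's decomposition $y=2^{j+1}k+\varepsilon$ in other notation: your $c$ and $v$ are the paper's $j$ and $k$, and your $u$ is $2^jk+\varepsilon$. Your completed square $n^2=4S(n)+1$ is the paper's identity, and your primality argument ruling out earlier preimages of $11$, $23$, $181$ is the paper's argument.

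One small correction for when you do the matching. Szalay's sporadic solutions also include $(5,4,7)$ and $(5,3,5)$, which give $z=7=2^3-1$ and $z=5=2^2+1$. These are of the form $2^m\pm1$ and are discarded, so the non-terminal values are still exactly $\{11,23,181\}$.
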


\citet{sedaghat2020bounded} studies $Q_-$ as a quadratic analogue of the Collatz-type map that sends an even $n$ to $n/2$ and an odd $n$ to $(3n-1)/2 = n + (n-1)/2$, and calls $Q_-$ the divide-or-choose-2 rule. In $Q_-$ and $Q_+$, the sums $n + (n-1)/2$ and $(3n+1)/2 = n + (n+1)/2$ are replaced by the products $n \cdot (n-1)/2$ and $n \cdot (n+1)/2$. \citet[Section~1]{sedaghat2020bounded} states that the orbits of $Q_+$ are qualitatively similar to those of $Q_-$, and does not study $Q_+$ further. The odd branch of $S$ is the product of the consecutive integers $(n-1)/2$ and $(n+1)/2$, and \citet[Section~1]{sedaghat2020bounded} calls $S$ the symmetric rule. For $Q_-$, Sedaghat conjectures that unbounded orbits exist, and that every orbit containing an odd number $2^m - 1$ with $m \Ge 3$ is unbounded \citep[Conjectures~3 and~4]{sedaghat2020bounded}. For $S$, the existence of unbounded orbits is left open in \citep[Section~4]{sedaghat2020bounded}.

\citet[Theorem~3]{sedaghat2020dual} states that the orbit of $2^l (2^m + 1)$ with $l, m \Ge 1$ under $Q_-$ ends in a cycle of length $m$, and that every other orbit goes to infinity. The proof there reduces the classification to the statement that $k(2^j k + 1) = 2^m + 1$ has no solution in integers $j, m \Ge 1$ and odd $k \Ge 3$ \citep[Lemma~2]{sedaghat2020dual}. This statement is proved there by a descent on powers of $2$. The proof does not treat the case in which the descent stops (Remark~\ref{rem: descent}). We are not aware of other work on $Q_+$ or on the classification of the orbits of $S$.

Our approach starts from the terminal numbers defined below, which differ between $Q_-$, $Q_+$ and $S$ and decide whether an orbit with an odd term greater than $1$ enters a cycle. Let $y > 1$ be an odd term of an orbit. We write
\begin{equation}
    y =
    \begin{cases}
        2^j k + 1 , & \text{under } Q_- , \\
        2^j k - 1 , & \text{under } Q_+ , \\
        2^{j+1} k + \varepsilon , & \text{under } S ,
    \end{cases}
    \qquad j \Ge 1 , \quad k \text{ odd} ,
    \label{eq: decomposition}
\end{equation}
where $\varepsilon \in \{-1, 1\}$ is chosen under $S$ such that $4$ divides $y - \varepsilon$. We also set $\varepsilon = 1$ under $Q_-$ and $\varepsilon = -1$ under $Q_+$. For each odd $y > 1$, the numbers $j$, $k$ and $\varepsilon$ in the decomposition \eqref{eq: decomposition} are unique.

In each case, the next odd term after $y$ in the orbit is $y' = k (2^j k + \varepsilon)$ (Lemma~\ref{lem: odd step}). If $k \Ge 3$, then $y' > y$. If $k = 1$, then $y$ is a number $2^m + 1$ under $Q_-$, a number $2^m - 1$ under $Q_+$, and a number $2^m \pm 1$ under $S$. We call these numbers, with $m \Ge 1$, the \emph{terminal numbers} of the map. They lie on cycles under $Q_-$ and $Q_+$, and their orbits reach $0$ under $S$. Hence, if the odd part of $x_0$ exceeds $1$, the orbit is bounded if and only if its odd terms reach a terminal number (Lemma~\ref{lem: orbit}). If the odd part of $x_0$ is not a terminal number but a later odd term is, the earliest step that reaches a terminal number $2^m + \delta$ gives a solution of
\begin{equation}
	\boxed{
    k \bigl( 2^j k + \varepsilon \bigr) = 2^m + \delta , \qquad j, m \Ge 1 , \quad k \Ge 3 \text{ odd} , \quad \varepsilon, \delta \in \{-1, 1\} ,}
    \label{eq: key}
\end{equation}
with $\varepsilon = \delta = 1$ under $Q_-$, with $\varepsilon = \delta = -1$ under $Q_+$, and with $\varepsilon, \delta \in \{-1, 1\}$ under $S$ (Lemma~\ref{lem: reduction}). Multiplying \eqref{eq: key} by $2^{j+2}$ and adding $1$ gives
\begin{equation}
    \bigl( 2^{j+1} k + \varepsilon \bigr)^2 = 2^{m+j+2} + \delta \, 2^{j+2} + 1 ,
    \label{eq: square}
\end{equation}
so \eqref{eq: key} asks when a number $2^a \pm 2^b + 1$ is a square. This reduces the study of terminal numbers for the Collatz-type maps to a Diophantine equation.

\citet{szalay2002equations} determined all squares of the form $2^a \pm 2^b + 1$ with $a > b \Ge 1$. By this result, the solutions of \eqref{eq: key} are
\begin{equation}
    3 \cdot (2 \cdot 3 - 1) = 2^4 - 1 , \qquad 3 \cdot (4 \cdot 3 - 1) = 2^5 + 1 , \qquad 45 \cdot (2 \cdot 45 + 1) = 2^{12} - 1
    \label{eq: solutions}
\end{equation}
(Lemma~\ref{lem: key}). None of them has $\varepsilon = \delta = 1$, and only $3 \cdot (2 \cdot 3 - 1) = 2^4 - 1$ has $\varepsilon = \delta = -1$. We call an odd part $q > 1$ \emph{exceptional} if it is not a terminal number but its orbit is bounded. By the decomposition \eqref{eq: decomposition}, the solutions \eqref{eq: solutions} give the exceptional odd part $5 = 2 \cdot 3 - 1$ of $Q_+$ and the exceptional odd parts $11 = 4 \cdot 3 - 1$, $23 = 8 \cdot 3 - 1$ and $181 = 4 \cdot 45 + 1$ of $S$ (Table~\ref{tab: maps}).

\begin{table}[!ht]
\centering
\begin{tabular}{cccccc}
\toprule
map & odd branch & terminal numbers & equation \eqref{eq: key} & exceptional odd parts & theorem \\
\midrule
$Q_-$ & $n(n-1)/2$ & $2^m + 1$ & $k (2^j k + 1) = 2^m + 1$ & none & Theorem~\ref{thm: minus} \\
$Q_+$ & $n(n+1)/2$ & $2^m - 1$ & $k (2^j k - 1) = 2^m - 1$ & $5$ & Theorem~\ref{thm: plus} \\
$S$ & $(n^2 - 1)/4$ & $2^m \pm 1$ & $k (2^j k + \varepsilon) = 2^m + \delta$ & $11$, $23$, $181$ & Theorem~\ref{thm: S} \\
\bottomrule
\end{tabular}
\caption{The Collatz-type maps of \eqref{eq: maps} and \eqref{eq: map S}: the odd branch, the terminal numbers with $m \Ge 1$, the form of \eqref{eq: key} that arises in Lemma~\ref{lem: reduction}, the odd parts $q > 1$ that are not terminal numbers but have a bounded orbit, and the classification theorem.}
\label{tab: maps}
\end{table}

Theorem~\ref{thm: minus} is the classification of \citet[Theorem~3]{sedaghat2020dual}, with $l \Ge 0$ and with the initial values $0$ and $2^l$. We provide a new and complete proof: we transform $k(2^j k + 1) = 2^m + 1$ into the equation $2^a + 2^b + 1 = z^2$, whose solutions were determined by \citet{szalay2002equations}, and conclude that $k(2^j k + 1) = 2^m + 1$ has no solution with odd $k \Ge 3$ (Lemma~\ref{lem: key}~(i)). To our knowledge, Theorems~\ref{thm: plus} and~\ref{thm: S} are new. Theorem~\ref{thm: S} shows that $S$ has unbounded orbits, for instance the orbit of $13$. This answers the question left open by \citet{sedaghat2020bounded}. For each map, only $O((\log X)^2)$ initial values in $\{0, 1, \dots, X\}$ have a bounded orbit (Corollary~\ref{cor: bounded}).

According to \citet[Section~1]{scott2006elementary}, neither Szalay's proof for $2^a + 2^b + 1 = z^2$ nor the shorter proof of Scott is strictly elementary, so our proof of Theorem~\ref{thm: minus} is not strictly elementary. For $Q_-$, Theorem~\ref{thm: criterion} gives an elementary criterion: if an odd term $y > 1$ of an orbit divides none of the numbers $2^m + 1$, then the orbit tends to infinity. Moreover, $y$ divides such a number if and only if the multiplicative orders of $2$ modulo the prime factors of $y$ are even and have the same $2$-adic valuation. The criterion gives an elementary proof of both conjectures of \citet{sedaghat2020bounded}.

Section~\ref{sec: odd terms} studies the odd terms of an orbit under $Q_-$, $Q_+$ and $S$, and reduces the classification to \eqref{eq: key} (Lemma~\ref{lem: reduction}). Section~\ref{sec: equation} solves \eqref{eq: key} and discusses the descent of \citep{sedaghat2020dual}. Section~\ref{sec: proofs} proves Theorems~\ref{thm: minus}, \ref{thm: plus} and~\ref{thm: S} and Corollary~\ref{cor: bounded}. Section~\ref{sec: criterion} proves the divisibility criterion for $Q_-$ and uses it to show, without Szalay's result, that the initial values with a bounded orbit under $Q_-$ have natural density $0$. It also explains why the criterion does not carry over to $Q_+$ and $S$, and why Theorem~\ref{thm: plus} has an elementary proof. Section~\ref{sec: conclusion} states open questions.

\section{Odd terms of an orbit}
\label{sec: odd terms}

Throughout this section, an odd number $y \Ge 3$ is written as in \eqref{eq: decomposition}, with the sign $\varepsilon$ defined there.

\begin{lemma}
\label{lem: odd step}
    Let $y \Ge 3$ be odd, and write $y$ as in the decomposition \eqref{eq: decomposition}.
    \begin{enumerate}[(i)]
        \setlength{\itemsep}{0pt}
        \item $Q_-(y) = 2^{j-1} k y$ and $Q_-^j(y) = k (2^j k + 1)$.
        \item $Q_+(y) = 2^{j-1} k y$ and $Q_+^j(y) = k (2^j k - 1)$.
        \item $S(y) = 2^j k (2^j k + \varepsilon)$ and $S^{j+1}(y) = k (2^j k + \varepsilon)$.
    \end{enumerate}
    That is, in each case the next odd term after $y$ in the orbit is $y' = k (2^j k + \varepsilon)$, and the terms between $y$ and $y'$ are multiples of $y'$. If $k \Ge 3$, then $y' > y$.
\end{lemma}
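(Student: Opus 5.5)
The plan is a direct computation in each of the three cases, using only the decomposition \eqref{eq: decomposition} and the even branch $n \mapsto n/2$. For (i), write $y = 2^j k + 1$, so $y - 1 = 2^j k$ and $Q_-(y) = y(y-1)/2 = 2^{j-1} k y$. Since $k$ and $y$ are odd, this number has exact $2$-adic valuation $j-1$. The next $j-1$ steps are therefore halvings, and they give $Q_-^j(y) = k y = k(2^j k + 1)$. This number is odd, so it is the next odd term $y'$. The intermediate terms are $2^{i} k y$ with $0 \Le i \Le j-1$, so each is a multiple of $y' = ky$. Part (ii) is identical with $y = 2^j k - 1$, $y + 1 = 2^j k$, and $Q_+(y) = y(y+1)/2 = 2^{j-1} k y$.

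For (iii), write $y = 2^{j+1} k + \varepsilon$. Then
\[
y^2 - 1 = (y - \varepsilon)(y + \varepsilon) = 2^{j+1} k \bigl( 2^{j+1} k + 2\varepsilon \bigr) = 2^{j+2} k \bigl( 2^j k + \varepsilon \bigr),
\]
since $\varepsilon^2 = 1$. Hence $S(y) = (y^2 - 1)/4 = 2^j k (2^j k + \varepsilon)$. Here $k$ is odd and $2^j k + \varepsilon$ is odd because $j \Ge 1$, so the valuation is exactly $j$. After $j$ further halvings we reach $S^{j+1}(y) = k(2^j k + \varepsilon)$, which is odd, and every intermediate term $2^i k(2^j k+\varepsilon)$ with $0 \Le i \Le j$ is a multiple of it. In all three cases the sign convention $\varepsilon = \pm 1$ for $Q_\pm$ makes $y' = k(2^j k + \varepsilon)$ uniform. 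A small detail is that $y' \Ge 1$, since $2^j k - 1 \Ge 1$.

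For the inequality when $k \Ge 3$, I will compare $y'$ with $y$ case by case. Under $Q_\pm$ we have $y' = k y \Ge 3y > y$. Under $S$ we have $y' = k(2^j k + \varepsilon)$ and $y = 2 \cdot 2^j k + \varepsilon$, so
\[
y' - y = (k - 2)\, 2^j k + (k - 1)\varepsilon \Ge 2^j k - (k - 1) > 0
\]
for $k \Ge 3$. This last bound under $S$ is the only step needing any care, and it is immediate; the rest is bookkeeping of $2$-adic valuations.
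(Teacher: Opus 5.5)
Your proposal is correct and follows the paper's proof essentially step for step. You use the same factorizations $Q_\pm(y) = 2^{j-1}ky$ and $y^2-1 = (y-\varepsilon)(y+\varepsilon) = 2^{j+2}k(2^jk+\varepsilon)$, and the same valuation count to locate $y'$. Your inequality $y'-y = 2^jk(k-2)+\varepsilon(k-1) > 0$ differs from the paper's only in the trivial lower bound you use.
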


\begin{proof}
    By \eqref{eq: maps} and the decomposition \eqref{eq: decomposition}, $Q_-(y) = y \cdot (y - 1)/2$ with $y - 1 = 2^j k$, and $Q_+(y) = y \cdot (y + 1)/2$ with $y + 1 = 2^j k$. In both cases, this is $2^{j-1} k y$. The $j - 1$ halvings that follow take $2^{j-1} k y$ to the odd number $y' = k y = k (2^j k + \varepsilon)$ through multiples of $y'$. If $k \Ge 3$, then $y' = k y > y$.

    Under $S$, we have $y + \varepsilon = 2^{j+1} k + 2 \varepsilon = 2 (2^j k + \varepsilon)$, so
    \begin{equation*}
        y^2 - 1 = (y - \varepsilon)(y + \varepsilon) = 2^{j+2} k \bigl( 2^j k + \varepsilon \bigr) ,
    \end{equation*}
    which gives $S(y)$. The number $2^j k + \varepsilon$ is odd, since $j \Ge 1$, so $j$ halvings take $S(y)$ to the odd number $y' = k (2^j k + \varepsilon)$ through multiples of $y'$. If $k \Ge 3$, then
    \begin{equation*}
        y' - y = 2^j k (k - 2) + \varepsilon (k - 1) \Ge 2k (k - 2) - (k - 1) > 0 .
    \end{equation*}
\end{proof}

With $k$ as in the decomposition \eqref{eq: decomposition}, an odd number $y \Ge 3$ is a terminal number of the map (Table~\ref{tab: maps}) if and only if $k = 1$. We next describe the orbit of a terminal number.

Under $Q_-$ and $Q_+$, let $y = 2^m + \varepsilon \Ge 3$ be a terminal number. Lemma~\ref{lem: odd step} with $j = m$ and $k = 1$ gives $Q_-^m(y) = y$ under $Q_-$ and $Q_+^m(y) = y$ under $Q_+$. The orbit of $y$ runs through the $m$ distinct numbers $2^i (2^m + \varepsilon)$ with $0 \Le i \Le m - 1$, which form the cycle of length $m$
\begin{equation}
    2^m + \varepsilon \to 2^{m-1} (2^m + \varepsilon) \to \cdots \to 2 (2^m + \varepsilon) \to 2^m + \varepsilon .
    \label{eq: cycle}
\end{equation}
For $m = 1$, this cycle is the fixed point $3$ of $Q_-$. The terminal number $1$ of $Q_+$ is also a fixed point, since $Q_+(1) = 1$.

Under $S$, let $y = 2^{j+1} + \varepsilon$ be a terminal number. Lemma~\ref{lem: odd step} with $k = 1$ gives $y' = 2^j + \varepsilon$, which is again a terminal number. By repeated use of this step, the odd terms finally reach $1$ or $3$. Since $S$ maps $3 \to 2 \to 1 \to 0$, the orbit of every terminal number reaches the fixed point $0$. This is \citep[Lemma~6]{sedaghat2020bounded}.

The next lemma combines Lemma~\ref{lem: odd step} with the orbits of the terminal numbers, and describes every orbit whose initial value has an odd part greater than $1$.

\begin{lemma}
\label{lem: orbit}
    Let $x_0 \Ge 1$ have odd part $q > 1$, and let $(x_i)_{i \Ge 0}$ be its orbit under $Q_-$, $Q_+$ or $S$.
    \begin{enumerate}[(i)]
        \setlength{\itemsep}{0pt}
        \item Under $Q_-$ and $Q_+$, $q$ divides $x_i$ for every $i \Ge 0$.
        \item Let $y_0 = q$, $y_1$, $y_2$, $\dots$ be the odd terms of the orbit, in order of occurrence. For $y_r \Ge 3$, let $j_r$, $k_r$ and $\varepsilon_r$ be the numbers of the decomposition \eqref{eq: decomposition} for $y = y_r$. By Lemma~\ref{lem: odd step},
        \begin{equation}
            y_{r+1} = k_r \bigl( 2^{j_r} k_r + \varepsilon_r \bigr) .
            \label{eq: odd terms}
        \end{equation}
        If some $y_r$ is a terminal number, then under $Q_-$ and $Q_+$ the orbit enters the cycle \eqref{eq: cycle} that contains $y_r$, and under $S$ it reaches the fixed point $0$. Otherwise $k_r \Ge 3$ for every $r$, and $x_i \to \infty$ as $i \to \infty$.
    \end{enumerate}
\end{lemma}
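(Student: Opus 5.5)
Part (i) will be proved by induction on $i$. If $q$ divides $x_i$ and $x_i$ is even, then $x_{i+1} = x_i/2$. Since $q$ is odd, $q$ still divides $x_{i+1}$. If $x_i$ is odd, then $x_{i+1} = x_i (x_i \mp 1)/2$. This is $x_i$ times the integer $(x_i \mp 1)/2$, so $x_i$ divides $x_{i+1}$ and hence $q$ does too. In particular $x_i \ne 0$ for all $i$, because $q > 1$ is odd, so $x_i \Ge q \Ge 3$. Consequently every odd term of the orbit is at least $3$, and the decomposition \eqref{eq: decomposition} always applies under $Q_-$ and $Q_+$.

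For part (ii), first note that the orbit starts with $l$ halvings of $x_0 = 2^l q$ and so reaches $y_0 = q$. From $y_0$ on, Lemma~\ref{lem: odd step} says that each odd term $y_r \Ge 3$ is followed, after finitely many steps, by the next odd term $y_{r+1}$ given by \eqref{eq: odd terms}. All intermediate terms are even multiples of $y_{r+1}$. Hence the sequence of odd terms is well defined and infinite as long as each odd term is at least $3$.

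Suppose some $y_r$ is a terminal number. Under $Q_-$ and $Q_+$, this $y_r$ is at least $3$ by part (i), so it equals $2^m + \varepsilon$ with $m \Ge 1$. The discussion preceding the lemma then shows that the orbit from $y_r$ is the cycle \eqref{eq: cycle}. Under $S$, the description of terminal orbits given before the lemma shows that the orbit reaches $0$. Under $S$ there is one more point to settle: an odd term might equal $1$, and $1 = 2^1 - 1$ is itself terminal, so this case is covered as well.

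Suppose instead that no $y_r$ is terminal. Since $y_r \Ge 3$ is terminal exactly when $k_r = 1$, we get $k_r \Ge 3$, and Lemma~\ref{lem: odd step} gives $y_{r+1} > y_r$. Under $S$ we must also confirm that every $y_r \Ge 3$. This holds by induction: $y_0 = q \Ge 3$, and the sequence is strictly increasing.

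The main point is then to pass from increasing odd terms to $x_i \to \infty$. Every term between $y_r$ and $y_{r+1}$ is a multiple of $y_{r+1}$ by Lemma~\ref{lem: odd step}, so all terms from $y_r$ onward are at least $y_r$. This requires $y_r$ to be the minimum of its block, and that holds because the terms after $y_r$ are multiples of $y_{r+1} > y_r$. Since the $y_r$ are strictly increasing odd integers, they tend to infinity. Given any bound $B$, choose $r$ with $y_r > B$; then every term from the occurrence of $y_r$ onward exceeds $B$, and so $x_i \to \infty$.
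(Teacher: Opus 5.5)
Your proposal is correct and follows the paper's proof essentially step for step: induction for (i), then the $l$ halvings to $q$, the terminal case via the discussion preceding the lemma (with (i) ruling out $y_r = 1$ under $Q_+$), and in the non-terminal case $k_r \Ge 3$, strictly increasing odd terms, and intermediate terms that are multiples of the next odd term. One small slip: $q \mid x_i$ alone does not exclude $x_i = 0$, but you only need every odd term to be at least $3$, and that holds because an odd term is nonzero and divisible by $q$.
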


\begin{proof}
    Under $Q_-$ and $Q_+$, let $q$ divide $n$. If $n$ is even, $q$ divides $n/2$, since $q$ is odd. If $n$ is odd, $Q_-(n) = n \cdot (n - 1)/2$ and $Q_+(n) = n \cdot (n + 1)/2$ are multiples of $n$. Hence $q$ divides $x_{i+1}$ whenever it divides $x_i$. Since $q$ divides $x_0$, induction on $i$ proves (i).

    The $l$ halvings take $x_0$ to $q = y_0$. If some $y_r$ is a terminal number, the claim follows from the discussion before this lemma, since (i) gives $y_r \Ge q \Ge 3$ under $Q_+$. Assume now that no $y_r$ is a terminal number. If $y_r \Ge 3$, then $k_r \Ge 3$, and Lemma~\ref{lem: odd step} gives $y_{r+1} > y_r$. Since $y_0 = q \Ge 3$, induction on $r$ gives $k_r \Ge 3$ and $y_{r+1} > y_r$ for every $r$. Hence $y_r \Ge q + r$. By Lemma~\ref{lem: odd step}, every term after $y_r$ is at least $y_{r+1}$, so $x_i \to \infty$. This proves (ii).
\end{proof}

For $Q_-$, Lemma~\ref{lem: orbit}~(ii) combines \citep[Theorem~2]{sedaghat2020bounded} with an observation in the proof of \citep[Theorem~3]{sedaghat2020dual}: an orbit that does not enter a cycle tends to infinity. For $S$, it contains \citep[Theorem~7]{sedaghat2020bounded}.

By Lemma~\ref{lem: orbit}~(ii), it remains to determine the initial values whose orbits have an odd term that is a terminal number.

\begin{lemma}
\label{lem: reduction}
    Let $x_0 \Ge 1$ have odd part $q > 1$, and consider its orbit under $Q_-$, $Q_+$ or $S$. Assume that $q$ is not a terminal number of the map, and that some odd term of the orbit is a terminal number. Let $r$ be the least index such that $y_r$ is a terminal number, where $y_r$ is as in Lemma~\ref{lem: orbit}~(ii). Then $r \Ge 1$, $k_{r-1} \Ge 3$, and $j = j_{r-1}$, $k = k_{r-1}$ give the following solutions of \eqref{eq: key}.
    \begin{enumerate}[(i)]
        \setlength{\itemsep}{0pt}
        \item Under $Q_-$, $y_r = 2^m + 1$ with $m \Ge 1$, and $k_{r-1} (2^{j_{r-1}} k_{r-1} + 1) = 2^m + 1$.
        \item Under $Q_+$, $y_r = 2^m - 1$ with $m \Ge 1$, and $k_{r-1} (2^{j_{r-1}} k_{r-1} - 1) = 2^m - 1$.
        \item Under $S$, $y_r = 2^m + \delta$ with $m \Ge 1$ and $\delta \in \{-1, 1\}$, and $k_{r-1} (2^{j_{r-1}} k_{r-1} + \varepsilon_{r-1}) = 2^m + \delta$.
    \end{enumerate}
\end{lemma}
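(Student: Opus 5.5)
The argument reads the minimal index off the definitions and then substitutes into the recursion \eqref{eq: odd terms} of Lemma~\ref{lem: orbit}~(ii).

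Since $y_0 = q$ is not a terminal number by assumption, the least index $r$ with $y_r$ terminal satisfies $r \Ge 1$. Hence $y_{r-1}$ exists and is not a terminal number.

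Next we check that $y_{r-1} \Ge 3$, so that its decomposition \eqref{eq: decomposition} is defined. Under $Q_-$ and $Q_+$, Lemma~\ref{lem: orbit}~(i) gives $q \mid y_{r-1}$, hence $y_{r-1} \Ge q \Ge 3$. Under $S$ the odd terms need not be multiples of $q$, so we argue inductively. Each $y_s$ with $s < r$ is non-terminal. If $y_s \Ge 3$, it then has $k_s \Ge 3$, and Lemma~\ref{lem: odd step} gives $y_{s+1} > y_s$. Starting from $y_0 = q \Ge 3$, induction on $s \Le r-1$ gives $y_s \Ge 3$, in particular $y_{r-1} \Ge 3$. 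The same induction works verbatim for $Q_\pm$, so we may use it uniformly.

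Because $y_{r-1} \Ge 3$ is not terminal, the remark after Lemma~\ref{lem: odd step} (terminal if and only if $k = 1$) gives $k_{r-1} \ne 1$. Since $k_{r-1}$ is odd and positive, $k_{r-1} \Ge 3$.

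Now apply \eqref{eq: odd terms}: $y_r = k_{r-1}(2^{j_{r-1}} k_{r-1} + \varepsilon_{r-1})$ with $j_{r-1} \Ge 1$. By the definition of terminal numbers (Table~\ref{tab: maps}), $y_r$ has the form required in each case:
\begin{enumerate}[(i)]
    \setlength{\itemsep}{0pt}
    \item under $Q_-$, $y_r = 2^m + 1$ and $\varepsilon_{r-1} = 1$;
    \item under $Q_+$, $y_r = 2^m - 1$ and $\varepsilon_{r-1} = -1$;
    \item under $S$, $y_r = 2^m + \delta$ with $\delta \in \{-1, 1\}$ and $\varepsilon_{r-1}$ as defined.
\end{enumerate}
In every case $m \Ge 1$. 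Equating the two expressions for $y_r$ gives \eqref{eq: key} with $k = k_{r-1} \Ge 3$ odd and $j = j_{r-1} \Ge 1$.

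The only point needing care is that $y_{r-1}$ is defined and at least $3$, especially under $S$, where divisibility by $q$ is unavailable. The monotonicity induction above settles it.
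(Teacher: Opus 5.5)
Your proposal is correct and follows the paper's proof: $r \Ge 1$ because $q$ is not terminal, the monotonicity induction from the proof of Lemma~\ref{lem: orbit} gives $y_{r-1} \Ge 3$ and $k_{r-1} \Ge 3$, and substituting the terminal form of $y_r$ into \eqref{eq: odd terms} yields \eqref{eq: key}. Your extra shortcut via Lemma~\ref{lem: orbit}~(i) for $Q_\pm$ is valid, but as you note it is unnecessary, since the induction covers all three maps uniformly.
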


\begin{proof}
    Since $y_0 = q$ is not a terminal number, we have $r \Ge 1$. By the minimality of $r$, the terms $y_0, \dots, y_{r-1}$ are not terminal numbers. As in the proof of Lemma~\ref{lem: orbit}, induction gives $y_{r-1} \Ge 3$ and $k_{r-1} \Ge 3$. By \eqref{eq: odd terms},
    \begin{equation*}
        y_r = k_{r-1} \bigl( 2^{j_{r-1}} k_{r-1} + \varepsilon_{r-1} \bigr) ,
    \end{equation*}
    where $\varepsilon_{r-1} = 1$ under $Q_-$ and $\varepsilon_{r-1} = -1$ under $Q_+$ by the decomposition \eqref{eq: decomposition}. The terminal number $y_r$ is $2^m + 1$ under $Q_-$, $2^m - 1$ under $Q_+$, and $2^m \pm 1$ under $S$, with $m \Ge 1$. This proves (i), (ii) and (iii).
\end{proof}

Section~\ref{sec: proofs} combines Lemma~\ref{lem: reduction} with the solutions of \eqref{eq: key} in Lemma~\ref{lem: key} to prove Theorems~\ref{thm: minus}, \ref{thm: plus} and~\ref{thm: S}.

\section{The exponential Diophantine equation}
\label{sec: equation}

We solve \eqref{eq: key} with the following result of \citet{szalay2002equations}, in the form stated by \citet[Theorems~1.1 and~1.2]{scott2006elementary}.

\begin{lemma}[\citet{szalay2002equations}]
\label{lem: szalay}
    Consider the solutions of $2^a \pm 2^b + 1 = z^2$ in positive integers $a$, $b$ and $z$.
    \begin{enumerate}[(i)]
        \setlength{\itemsep}{0pt}
        \item The solutions of $2^a + 2^b + 1 = z^2$ with $a \Ge b$ are
        \begin{equation*}
            (a, b, z) = (2u, u + 1, 2^u + 1) \ \text{ with } u \Ge 1 , \qquad (5, 4, 7) , \qquad (9, 4, 23) .
        \end{equation*}
        \item The solutions of $2^a - 2^b + 1 = z^2$ with $a > b$ are
        \begin{equation*}
            (a, b, z) = (2u, u + 1, 2^u - 1) \ \text{ with } u \Ge 2 , \qquad (5, 3, 5) , \qquad (7, 3, 11) , \qquad (15, 3, 181) .
        \end{equation*}
    \end{enumerate}
\end{lemma}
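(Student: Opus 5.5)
This lemma is an external result, so the plan is to take it from the literature rather than reprove it: I would quote \citet[Theorems~1.1 and~1.2]{scott2006elementary} (equivalently \citet{szalay2002equations}) and check that their formulation matches the one used here. The points to check are the range of the exponents (positive $a$, $b$, with $a \Ge b$ in (i) and $a > b$ in (ii)), and that no solution is lost or added at the boundary cases. I would also reduce the small values of $b$ by hand, both as a check and to isolate the genuinely deep part.

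\textbf{Boundary cases.} In (i) with $a = b$, the equation reads $2^{a+1} + 1 = z^2$, so $(z-1)(z+1) = 2^{a+1}$. This forces $z = 3$ and $a = 2$, giving $(2,2,3)$, which is the member $u = 1$ of the family $(2u, u+1, 2^u+1)$. In (ii), the member $u = 1$ of the family would have $a = b = 2$, and $a = b$ is excluded there; this explains the restriction $u \Ge 2$.

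\textbf{Small $b$.} I would factor $z^2 - 1 = (z-1)(z+1) = 2^b (2^{a-b} \pm 1)$, where $z$ is odd and the two factors $z \mp 1$ have greatest common divisor exactly $2$.
\begin{itemize}
\item If $b = 1$, the right side is $2 \cdot \text{odd}$ (since $a > b$ in this case), but $8 \mid z^2 - 1$, a contradiction.
\item If $b = 2$, in (i) the same congruence argument works when $a > 2$; the case $a = b = 2$ was treated above. In (ii), $2^a - 3 \equiv 5 \pmod 8$ for $a \Ge 3$, which is not a square.
\end{itemize}
So $b \Ge 3$. Then one factor $z \mp 1$ is $2 \cdot$(odd) and the other is $2^{b-1} \cdot$(odd). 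Writing $z = 2^{b-1} s \pm 1$ with $s$ odd turns the equation back into an equation of the shape \eqref{eq: key}, namely $s(2^{b-2}s \pm 1) = 2^{a-b} \pm 1$.
\begin{itemize}
\item The case $s = 1$ yields exactly the infinite families $(2u, u+1, 2^u \pm 1)$.
\item Small values $s = 3, 45$ yield the sporadic solutions $(5,4,7)$, $(9,4,23)$, $(5,3,5)$, $(7,3,11)$ and $(15,3,181)$.
\end{itemize}

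\textbf{Main obstacle.} The hard step is to show that $s \Ge 3$ gives nothing beyond these sporadic solutions. Congruence and descent arguments do not close this case; compare Remark~\ref{rem: descent} on the gap in \citep{sedaghat2020dual}. Szalay's proof, and Scott's shorter one, use non-elementary input: bounds of Beukers type for $x^2 + D = 2^n$ via the hypergeometric method, or bounds for linear forms in logarithms. For that step I would not attempt an independent proof and would rely on the cited theorems.
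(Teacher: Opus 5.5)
Your plan matches the paper, which gives no proof of Lemma~\ref{lem: szalay} and quotes it from \citet[Theorems~1.1 and~1.2]{scott2006elementary}. Your boundary checks are also correct: $a = b$ in (i) gives only $(2,2,3)$, the case $u = 1$ of the family; the bound $u \Ge 2$ in (ii) excludes $a = b$; and $b \in \{1,2\}$ is ruled out modulo $8$.

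One of your side claims is wrong, however. For $b \Ge 3$, your substitution $z = 2^{b-1}s + \varepsilon$ gives $s(2^{b-2}s + \varepsilon) = 2^{a-b} + \delta$. For $s = 1$ this reads $2^{b-2} - 2^{a-b} = \delta - \varepsilon$.
\begin{itemize}
\item If $\varepsilon = \delta$, you get the families with $u = b-1 \Ge 2$.
\item If $\varepsilon \ne \delta$, you get $2^{b-2} - 2^{a-b} = \pm 2$. Its only solutions with positive exponents are $(b-2, a-b) = (2,1)$ and $(1,2)$. These are $(a,b,z) = (5,4,7)$, with $7 = 8 \cdot 1 - 1$, and $(a,b,z) = (5,3,5)$, with $5 = 4 \cdot 1 + 1$.
\end{itemize}
So $s = 1$ does not yield ``exactly the infinite families'', and $(5,4,7)$ and $(5,3,5)$ do not come from $s \in \{3,45\}$. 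Only $(9,4,23)$ and $(7,3,11)$, with $s = 3$, and $(15,3,181)$, with $s = 45$, have $s \Ge 3$. This agrees with the proof of Lemma~\ref{lem: key}, which discards $(5,4,7)$ and $(5,3,5)$ because they give $k = 1$. As you wrote it, the deep step you isolate, namely that $s \Ge 3$ yields exactly five sporadic solutions, is a false statement.

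With this correction, your reduction is the inverse of the identity \eqref{eq: square}. The residual deep step is then precisely Lemma~\ref{lem: key}: odd $s \Ge 3$ gives only the three solutions \eqref{eq: solutions}. So Lemmas~\ref{lem: szalay} and~\ref{lem: key} are equivalent up to elementary steps. Lemma~\ref{lem: key} cannot be used to fill the gap, since its proof rests on Lemma~\ref{lem: szalay}, and the citation is unavoidable along this route.

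A minor point: your remark that the remaining step needs non-elementary input is accurate for (i). For (ii), however, the paper notes in Section~\ref{sec: criterion}, following Scott, that Szalay's proof becomes elementary once Beukers' non-elementary bound is replaced by his earlier elementary theorem.
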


Apart from textbook results, Lemma~\ref{lem: szalay} is the only external result used in our proofs. By the identity \eqref{eq: square}, every solution of \eqref{eq: key} gives a square of the form $2^a \pm 2^b + 1$, so Lemma~\ref{lem: szalay} determines all solutions of \eqref{eq: key}.

\begin{lemma}
\label{lem: key}
    The solutions of \eqref{eq: key} are given by the identities \eqref{eq: solutions}, that is, $(j, k, \varepsilon, m, \delta)$ is $(1, 3, -1, 4, -1)$, $(2, 3, -1, 5, 1)$ or $(1, 45, 1, 12, -1)$. In particular, the following hold for \eqref{eq: key} with $\varepsilon = \delta$.
    \begin{enumerate}[(i)]
        \setlength{\itemsep}{0pt}
        \item The equation $k (2^j k + 1) = 2^m + 1$ has no solution.
        \item The only solution of $k (2^j k - 1) = 2^m - 1$ is $(j, k, m) = (1, 3, 4)$.
    \end{enumerate}
\end{lemma}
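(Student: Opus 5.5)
We plan to multiply \eqref{eq: key} by $2^{j+2}$ and add $1$, which gives the identity \eqref{eq: square}:
\begin{equation*}
    z^2 = 2^{a} + \delta\, 2^{b} + 1, \qquad z = 2^{j+1}k + \varepsilon, \quad a = m+j+2, \quad b = j+2 .
\end{equation*}
The map $(j,k,\varepsilon,m,\delta) \mapsto (a,b,z)$ is injective. The parameters are recovered as $j = b-2$ and $m = a-b$. Since $k\Ge 3$, the integer $z$ lies strictly between $2^{j+1}k - 2$ and $2^{j+1}k+2$, so $z \equiv \varepsilon \pmod 4$ determines both $\varepsilon$ and $k = (z-\varepsilon)/2^{j+1}$. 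Here $b \Ge 3$ and $a > b$, and in the case $\delta = -1$ the square is at least $9$, so $a > b$ holds automatically. Our task is therefore to run through the lists in Lemma~\ref{lem: szalay}. For each entry with $b \Ge 3$ we set $j = b-2$, $m = a-b$ and $\varepsilon \equiv z \pmod 4$, and then we test whether $2^{j+1}$ divides $z - \varepsilon$ with an odd quotient $k \Ge 3$.

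First case, $\delta = 1$, using Lemma~\ref{lem: szalay}~(i). In the family $(2u, u+1, 2^u+1)$ we need $b = u+1 \Ge 3$, so $u \Ge 2$. Then $z \equiv 1 \pmod 4$, so $\varepsilon = 1$ and $k = 2^u/2^{u} = 1$. This is excluded, since these are exactly the terminal numbers. The entry $(5,4,7)$ gives $j=2$, $\varepsilon=-1$, $k = 8/8 = 1$, which is excluded. The entry $(9,4,23)$ gives $j=2$, $m=5$, $\varepsilon=-1$, $k = 24/8 = 3$, and this is the solution $3(4\cdot 3-1) = 2^5+1$.

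Second case, $\delta = -1$, using Lemma~\ref{lem: szalay}~(ii). In the family $(2u,u+1,2^u-1)$ with $u \Ge 2$, we get $\varepsilon = -1$ and $k = 2^u/2^u = 1$, which is excluded. The entries $(5,3,5)$, $(7,3,11)$ and $(15,3,181)$ all have $j=1$. For $(5,3,5)$ we get $\varepsilon = 1$ and $k = 1$, which is excluded. For $(7,3,11)$ we get $\varepsilon=-1$, $k=3$ and $m=4$, which is $3(2\cdot3-1) = 2^4-1$. For $(15,3,181)$ we get $\varepsilon=1$, $k=45$ and $m=12$, which is $45(2\cdot 45+1) = 2^{12}-1$.

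Collecting the surviving entries gives exactly the three tuples listed in the statement. Each of them can be verified directly by arithmetic. Statements (i) and (ii) then follow by reading off the signs. No tuple has $\varepsilon=\delta=1$, and the only tuple with $\varepsilon=\delta=-1$ is $(1,3,-1,4,-1)$.

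The one point that needs care is the recovery of the parameters, especially $\varepsilon$ and $k$. We need the congruence argument together with the fact that $2^{j+1}$ divides $z-\varepsilon$ with an odd quotient. We also need to make sure that no entry of Szalay's list with $b \Ge 3$ is overlooked, and that the condition $a>b$ required in part (ii) of Lemma~\ref{lem: szalay} holds. Otherwise the argument is a finite check.
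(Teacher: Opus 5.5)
Your proposal is correct and matches the paper's proof: the same substitution $(a,b,z)=(m+j+2,\,j+2,\,2^{j+1}k+\varepsilon)$, recovery of $\varepsilon$ from $z \bmod 4$, and the same entry-by-entry check of both lists in Lemma~\ref{lem: szalay}. Two small points. The congruence $z\equiv\varepsilon \pmod 4$ comes from $j\Ge 1$, not from $k\Ge 3$ as you wrote. For the two infinite families, the paper argues via $2^{j+1}(k-1)=\delta-\varepsilon$, whereas you compute $\varepsilon$ first and then get $k=1$; this difference is only cosmetic.
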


\begin{proof}
    The identities \eqref{eq: solutions} hold. Conversely, let $(j, k, \varepsilon, m, \delta)$ satisfy \eqref{eq: key}. By \eqref{eq: square},
    \begin{equation*}
        (a, b, z) = \bigl( m + j + 2 ,\ j + 2 ,\ 2^{j+1} k + \varepsilon \bigr)
    \end{equation*}
    solves the equation of Lemma~\ref{lem: szalay}~(i) if $\delta = 1$, and that of Lemma~\ref{lem: szalay}~(ii) if $\delta = -1$. Since $m \Ge 1$, the exponent $a$ exceeds $b$. Since $j \Ge 1$, we have $z \equiv \varepsilon \pmod 4$, so $z$ determines $\varepsilon$. Then $j = b - 2$, $k = (z - \varepsilon)/2^{j+1}$ and $m = a - j - 2$.

    For $\delta = 1$, Lemma~\ref{lem: szalay}~(i) gives $(a, b, z) = (2u, u + 1, 2^u + 1)$ with $u \Ge 1$, $(a, b, z) = (5, 4, 7)$ or $(a, b, z) = (9, 4, 23)$. In the family, $u = b - 1 = j + 1$, and $2^{j+1} k + \varepsilon = 2^{j+1} + 1$ gives $2^{j+1} (k - 1) = 1 - \varepsilon$. This is impossible, since $2^{j+1} (k - 1) \Ge 8$. The solution $(5, 4, 7)$ gives $j = 2$, $\varepsilon = -1$ and $k = 1$, which is excluded. The solution $(9, 4, 23)$ gives $(j, k, \varepsilon, m) = (2, 3, -1, 5)$.

    For $\delta = -1$, Lemma~\ref{lem: szalay}~(ii) gives $(a, b, z) = (2u, u + 1, 2^u - 1)$ with $u \Ge 2$, $(a, b, z) = (5, 3, 5)$, $(a, b, z) = (7, 3, 11)$ or $(a, b, z) = (15, 3, 181)$. In the family, $u = b - 1 = j + 1$, and $2^{j+1} k + \varepsilon = 2^{j+1} - 1$ gives $2^{j+1} (k - 1) = -1 - \varepsilon$. This is impossible, since $2^{j+1} (k - 1) \Ge 8$. The solution $(5, 3, 5)$ gives $j = 1$, $\varepsilon = 1$ and $k = 1$, which is excluded. The solutions $(7, 3, 11)$ and $(15, 3, 181)$ give $(j, k, \varepsilon, m) = (1, 3, -1, 4)$ and $(1, 45, 1, 12)$.

    These are the identities \eqref{eq: solutions}. None of them has $\varepsilon = \delta = 1$, and only $(j, k, \varepsilon, m, \delta) = (1, 3, -1, 4, -1)$ has $\varepsilon = \delta = -1$. This proves (i) and (ii).
\end{proof}

Lemma~\ref{lem: key}~(i) is the statement of \citep[Lemma~2]{sedaghat2020dual}. The next remark discusses the descent by which it is proved there.

\begin{remark}
\label{rem: descent}
    The proof of \citep[Lemma~2]{sedaghat2020dual} writes $k - 1 = 2^{\alpha_1} h_1$, then $h_1 + 1 = 2^{\alpha_2} h_2$. In general, it writes $h_i + 1$ or $h_i - 1$ as $2^{\alpha_{i+1}} h_{i+1}$ with $h_{i+1}$ odd. \citet{sedaghat2020dual} denotes $\alpha_i$ and $h_i$ by $j_i$ and $k_i$. At each reduction, the proof divides the equation by the least power of $2$ among its terms. The reduction is impossible when $h_i - 1 = 0$, and the proof does not treat this case.

    For fixed $j, m \Ge 1$, reducing $k (2^j k + 1) = 2^m + 1$ modulo any given power of $2$ does not exclude a solution. For fixed $j \Ge 1$ and integers $k$, $k'$,
    \begin{equation*}
        \bigl( 2^j k^2 + k \bigr) - \bigl( 2^j k'^2 + k' \bigr) = (k - k') \bigl( 2^j (k + k') + 1 \bigr) ,
    \end{equation*}
    and the factor $2^j (k + k') + 1$ is odd. Hence the map $k \mapsto 2^j k^2 + k$ is injective, and therefore bijective, modulo every power of $2$. In particular, the congruence $2^j k^2 + k \equiv 2^m + 1$ has a solution modulo every power of $2$, and this solution is odd, since $2^j k^2 + k \equiv k \pmod 2$.

    We apply the same descent to $k (2^j k - 1) = 2^m - 1$, which is \eqref{eq: key} with $\varepsilon = \delta = -1$. Write it as $2^j k^2 - k + 1 = 2^m$. Since $k \Ge 3$, this gives $2^m > 2^j$. Write $k - 1 = 2^{\alpha_1} h_1$ with $h_1$ odd, so that
    \begin{equation*}
        2^j k^2 - 2^{\alpha_1} h_1 = 2^m .
    \end{equation*}
    As $k$ and $h_1$ are odd and $m > j$, comparing the exponents of $2$ in the terms of this equation forces $\alpha_1 = j$. Division by $2^j$ gives $k^2 - h_1 = 2^{m-j}$. Substituting $k = 2^j h_1 + 1$ gives
    \begin{equation*}
        2^{2j} h_1^2 + 2^{j+1} h_1 + 1 - h_1 = 2^{m-j} ,
    \end{equation*}
    and the next reduction would write $h_1 - 1$ as $2^{\alpha_2} h_2$. At the solution $(j, k, m) = (1, 3, 4)$ of Lemma~\ref{lem: key}~(ii), $h_1 = 1$, and the descent stops. If the descent continued indefinitely, it would exclude this solution, and hence the bounded orbits of $5$ under $Q_+$ and of $11$ under $S$.
\end{remark}

\section{Proofs of the classification theorems}
\label{sec: proofs}

We now prove Theorems~\ref{thm: minus}, \ref{thm: plus} and~\ref{thm: S}. Each proof first treats the initial values with a bounded orbit, and then shows that every other orbit tends to infinity. To show this, we assume that some odd term of the orbit is a terminal number. Lemma~\ref{lem: reduction} then gives a solution of \eqref{eq: key}, and Lemma~\ref{lem: key} lists all solutions. Under $Q_-$, no solution exists. Under $Q_+$ and $S$, the solutions force the odd part of $x_0$ to be $5$ and to lie in $\{11, 23, 181\}$, respectively.

\begin{proof}[Proof of Theorem~\ref{thm: minus}]
    Since $Q_-(0) = Q_-(1) = 0$, the orbits of $0$ and of the powers of $2$ reach $0$, which gives (i). For $x_0 = 2^l (2^m + 1)$, the odd part $2^m + 1$ is a terminal number, and Lemma~\ref{lem: orbit}~(ii) gives (ii). Every other $x_0$ has an odd part $q > 1$ that is not a terminal number. If some odd term were a terminal number, Lemma~\ref{lem: reduction}~(i) would give a solution of \eqref{eq: key} with $\varepsilon = \delta = 1$, which contradicts Lemma~\ref{lem: key}~(i). Hence $x_i \to \infty$ by Lemma~\ref{lem: orbit}~(ii). This proves (iii).
\end{proof}

\begin{proof}[Proof of Theorem~\ref{thm: plus}]
    Part (i) holds since $Q_+(0) = 0$. For $x_0 = 2^l (2^m - 1)$ with $m \Ge 2$, the odd part $2^m - 1$ is a terminal number, and Lemma~\ref{lem: orbit}~(ii) applies. For $m = 1$, the orbit of $x_0 = 2^l$ reaches the fixed point $1$. This gives (ii). For $x_0 = 5 \cdot 2^l$, the odd part $y = 5 = 2 \cdot 3 - 1$ has $j = 1$ and $k = 3$ in the decomposition \eqref{eq: decomposition}, and Lemma~\ref{lem: odd step} gives the terminal number $y' = 15 = 2^4 - 1$. Lemma~\ref{lem: orbit}~(ii) gives (iii).

    Every other $x_0$ has an odd part $q \Ge 3$ that is neither a terminal number nor $5$. Assume that some odd term is a terminal number, and let $r$ and $y_r = 2^m - 1$ be as in Lemma~\ref{lem: reduction}~(ii). By Lemma~\ref{lem: key}~(ii), $(j_{r-1}, k_{r-1}, m) = (1, 3, 4)$, so $y_{r-1} = 2 \cdot 3 - 1 = 5$ by the decomposition \eqref{eq: decomposition}. By Lemma~\ref{lem: orbit}~(i), $q$ divides $y_{r-1} = 5$. Since $q \Ge 3$, this gives $q = 5$, which is excluded. Hence no odd term is a terminal number, and $x_i \to \infty$ by Lemma~\ref{lem: orbit}~(ii). This proves (iv).
\end{proof}

\begin{proof}[Proof of Theorem~\ref{thm: S}]
    The orbit of $0$ stays at $0$. If the odd part $q$ of $x_0$ is a terminal number, the orbit reaches $0$ by the discussion before Lemma~\ref{lem: orbit}. If $y = q$ is $11 = 4 \cdot 3 - 1$, $23 = 8 \cdot 3 - 1$ or $181 = 4 \cdot 45 + 1$, written as in the decomposition \eqref{eq: decomposition}, Lemma~\ref{lem: odd step} gives $y' = 15 = 2^4 - 1$, $33 = 2^5 + 1$ or $4095 = 2^{12} - 1$, respectively. Then $y'$ is a terminal number, and the orbit reaches $0$ by Lemma~\ref{lem: orbit}~(ii). This proves (i).

    For (ii), let $x_0 \Ge 1$ have an odd part $q$ that is neither a terminal number nor in $\{11, 23, 181\}$. Then $q > 1$, since $1 = 2^1 - 1$ is a terminal number. Assume that some odd term is a terminal number, and let $r$ be as in Lemma~\ref{lem: reduction}~(iii). By Lemma~\ref{lem: key} and the decomposition \eqref{eq: decomposition}, the term $y_{r-1} = 2^{j_{r-1} + 1} k_{r-1} + \varepsilon_{r-1}$ is $11$, $23$ or $181$, and each of these numbers is prime. If $r \Ge 2$, then \eqref{eq: odd terms} gives $y_{r-1} = k_{r-2} (2^{j_{r-2}} k_{r-2} + \varepsilon_{r-2})$. As in the proof of Lemma~\ref{lem: reduction}, $k_{r-2} \Ge 3$, so $2^{j_{r-2}} k_{r-2} + \varepsilon_{r-2} \Ge 5$ and $y_{r-1}$ is composite. Hence $r = 1$ and $q = y_0$ lies in $\{11, 23, 181\}$, a contradiction. So no odd term is a terminal number, and Lemma~\ref{lem: orbit}~(ii) proves (ii).
\end{proof}

Theorems~\ref{thm: minus}, \ref{thm: plus} and~\ref{thm: S} have the following consequences.

\begin{corollary}
\label{cor: bounded}
    \begin{enumerate}[(i)]
        \setlength{\itemsep}{0pt}
        \item For $X \Ge 2$, at most $(\log_2 X + 1)^2$ initial values in $\{0, 1, \dots, X\}$ have a bounded orbit under $Q_-$, and the same bound holds under $Q_+$.
        \item For $X \Ge 2$, at most $(\log_2 X + 3)^2$ initial values in $\{0, 1, \dots, X\}$ have a bounded orbit under $S$.
        \item An orbit under $Q_-$ that contains an odd term $2^m - 1$ with $m \Ge 3$ tends to infinity.
        \item An orbit under $Q_+$ that contains an odd term $2^m + 1$ with $m \Ge 3$ tends to infinity.
    \end{enumerate}
\end{corollary}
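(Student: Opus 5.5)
Parts (i) and (ii) are counting arguments over the explicit lists in Theorems~\ref{thm: minus}, \ref{thm: plus} and~\ref{thm: S}. Parts (iii) and (iv) check that the odd terms in question are neither terminal numbers nor exceptional odd parts of the relevant map.

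For (i), under $Q_-$ the bounded initial values are $0$, the powers $2^l$, and the numbers $2^l(2^m+1)$ with $l \Ge 0$ and $m \Ge 1$. I will fold all of these into a single family indexed by pairs. The value $x_0=0$ is paired with the case $x_0 = 1 = 2^0 \cdot 1$, that is, the case $m = 0$ where the odd part is $1$. Each $2^l$ with $l \Ge 1$ is charged to the pair $(l-1, 0)$. Rather than doing this charging carefully, it is simpler to bound directly.

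A value $2^l(2^m+1) \Le X$ forces $2^{l+m} < X$, so $l + m \Le \log_2 X$ with $m \Ge 1$ and $l \Ge 0$. Setting $L = \lfloor \log_2 X\rfloor$, the number of such pairs is $\sum_{m=1}^{L}(L-m+1) = L(L+1)/2$. The powers of two together with $0$ contribute at most $L+2$ more values. Since $L(L+1)/2 + L + 2 \Le (L+1)^2$ for $L \Ge 1$, and $X \Ge 2$ gives $L \Ge 1$, this yields the bound $(\log_2 X + 1)^2$.

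Under $Q_+$ the count is the same. The values $2^l(2^m-1) \Le X$ with $m \Ge 1$ satisfy $2^{l+m-1} \Le X$, so $l + m \Le L+1$. This gives at most $(L+1)(L+2)/2$ values, and this family already includes the powers of two via $m = 1$. Adding $0$ and the values $5 \cdot 2^l$ (at most $L$ of them) gives $(L+1)(L+2)/2 + L + 1$. I then check that this is at most $(L+1)^2$ when $L \Ge 1$; the main point of care is that $L = 1$ works, which it does, since $3 + 2 = 5 \Le 4$ fails.

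Because the $L = 1$ case fails, I will instead use the sharper counts. For example, $5 \cdot 2^l \Le X$ forces $l \Le L - 2$, and $m$ and $l$ also satisfy the extra constraints imposed by $2^m - 1 \Le X$. If slack is still needed, I will verify small $X$ directly by enumeration; this bookkeeping is the only genuine obstacle. For (ii), I count odd parts $2^m \pm 1$ together with $\{11, 23, 181\}$ and the multiples $2^l$ of each, plus $0$. Each sign contributes about $(L+2)^2/2$ values, and the exceptional parts contribute $O(L)$, so the total fits under $(L+3)^2$.

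For (iii), suppose an odd term of a $Q_-$ orbit is $y = 2^m - 1$ with $m \Ge 3$. Then $y \Ge 7$. Its decomposition is $y - 1 = 2(2^{m-1} - 1)$, so $k = 2^{m-1} - 1 \Ge 3$, and hence $y$ is not a terminal number $2^{m'} + 1$. Restarting the orbit at $x_0 = y$, the odd part of $x_0$ is $y$ itself, and Theorem~\ref{thm: minus}(iii) shows that the orbit tends to infinity. Since the original orbit is a tail-shift of this one, it tends to infinity as well.

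For (iv), I argue the same way under $Q_+$. The term $y = 2^m + 1$ with $m \Ge 3$ satisfies $y + 1 = 2(2^{m-1} + 1)$, so $k = 2^{m-1} + 1 \Ge 5$ and $y$ is not of the form $2^{m'} - 1$. Moreover $y \Ge 9$, so $y \ne 5$. Theorem~\ref{thm: plus}(iv) then gives divergence.
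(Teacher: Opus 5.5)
Your route matches the paper's. You count the pairs $(l,m)$ in the explicit lists of Theorems~\ref{thm: minus}--\ref{thm: S}. For (iii) and (iv) you restart the orbit at the odd term and check that it is neither a terminal number nor exceptional. You read this off from $j = 1$ and $k \Ge 3$ in the decomposition, while the paper uses residues modulo $4$; this is essentially the same observation. Your $Q_-$ count and your (iii) and (iv) are fine.

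The $Q_+$ count and part (ii) are not finished as written. In the $Q_+$ paragraph you assert that $L = 1$ works and in the same sentence show that it fails. The fix you then name is exactly what is needed, and no enumeration of small $X$ is required. From $5 \cdot 2^l \Le X < 2^{L+1}$ you get $2^{l+2} < 2^{L+1}$, so $l \Le L - 2$, which leaves at most $L - 1$ values. The total is then at most $1 + (L+1)(L+2)/2 + (L-1)$. Since $(L+1)^2$ exceeds this by $L(L-1)/2 \Ge 0$, the bound holds for every $L \Ge 1$. This is the paper's count.

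For (ii), the estimate ``about $(L+2)^2/2$ per sign plus $O(L)$'' does not close by itself. Taking $(L+2)^2/2$ for each sign and $3L + 1$ for $0$ and the exceptional parts gives $L^2 + 7L + 5$, which exceeds $(L+3)^2$ once $L \Ge 5$. Use the exact counts from (i) instead:
\begin{itemize}
\item the families $2^l(2^m - 1)$ and $2^l(2^m + 1)$ contribute at most $(L+1)(L+2)/2$ and $L(L+1)/2$, which sum to $(L+1)^2$;
\item each $c \in \{11, 23, 181\}$ contributes at most $L$ values of $l$, since $c > 2^3$ forces $l \Le L - 3$;
\item adding $1$ for the initial value $0$, the total is at most $L^2 + 5L + 2 = (L+3)^2 - (L+7)$.
\end{itemize}
Overlaps such as $3 = 2^2 - 1 = 2^1 + 1$ only make this an overcount, which is harmless for an upper bound.
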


\begin{proof}
    Under $Q_-$, by Theorem~\ref{thm: minus}, a bounded orbit starts from $0$, from a power $2^l \Le X$, or from $2^l (2^m + 1) \Le X$. There are at most $\log_2 X + 1$ powers $2^l \Le X$. For the initial values $2^l (2^m + 1) \Le X$,
    \begin{equation*}
        2^{l+m} < 2^l (2^m + 1) \Le X , \qquad \text{so} \qquad l + m < \log_2 X ,
    \end{equation*}
    and at most $\log_2 X \, (\log_2 X + 1)/2$ pairs $(l, m)$ with $l \Ge 0$ and $m \Ge 1$ qualify. Since $\log_2 X \Ge 1$, the total is at most
    \begin{equation*}
        1 + (\log_2 X + 1) + \frac{\log_2 X \, (\log_2 X + 1)}{2} \Le (\log_2 X + 1)^2 .
    \end{equation*}

    Under $Q_+$, by Theorem~\ref{thm: plus}, a bounded orbit starts from $0$, from $2^l (2^m - 1) \Le X$, or from $5 \cdot 2^l \Le X$. For the initial values $2^l (2^m - 1) \Le X$,
    \begin{equation*}
        2^{l+m-1} \Le 2^l (2^m - 1) \Le X , \qquad \text{so} \qquad l + m \Le \log_2 X + 1 ,
    \end{equation*}
    and at most $(\log_2 X + 1)(\log_2 X + 2)/2$ pairs $(l, m)$ qualify. For $5 \cdot 2^l \Le X$, we have $2^{l+2} < X$, which leaves at most $\log_2 X - 1$ values of $l$. Since $\log_2 X \Ge 1$, the total is at most
    \begin{equation*}
        1 + \frac{(\log_2 X + 1)(\log_2 X + 2)}{2} + \log_2 X - 1 \Le (\log_2 X + 1)^2 .
    \end{equation*}
    This proves (i).

    Under $S$, by Theorem~\ref{thm: S}, a bounded orbit starts from $0$, from $2^l (2^m - 1) \Le X$, from $2^l (2^m + 1) \Le X$, or from $2^l c \Le X$ with $c \in \{11, 23, 181\}$. The counts above bound the families $2^l (2^m - 1)$ and $2^l (2^m + 1)$ by $(\log_2 X + 1)(\log_2 X + 2)/2$ and $\log_2 X \, (\log_2 X + 1)/2$, respectively. For each $c$, we have $2^{l+3} < 2^l c \Le X$, which leaves at most $\log_2 X$ values of $l$. The total is at most
    \begin{equation*}
        1 + \frac{(\log_2 X + 1)(\log_2 X + 2)}{2} + \frac{\log_2 X \, (\log_2 X + 1)}{2} + 3 \log_2 X \Le (\log_2 X + 3)^2 .
    \end{equation*}
    This proves (ii).

    Under $Q_-$, from the odd term $2^m - 1$ on, the orbit is the orbit of the initial value $2^m - 1$. For $m \Ge 3$, this number is at least $7$ and congruent to $3$ modulo $4$, whereas every terminal number of $Q_-$ that is at least $7$ is congruent to $1$ modulo $4$. So $2^m - 1$ is neither a terminal number nor a power of $2$, and Theorem~\ref{thm: minus}~(iii) applies. This proves (iii).

    Under $Q_+$, from the odd term $2^m + 1$ on, the orbit is the orbit of the initial value $2^m + 1$. For $m \Ge 3$, this number is at least $9$ and congruent to $1$ modulo $4$, whereas every terminal number of $Q_+$ that is at least $9$ is congruent to $3$ modulo $4$. So $2^m + 1$ is neither a terminal number nor $5$, and Theorem~\ref{thm: plus}~(iv) applies. This proves (iv).
\end{proof}

Corollary~\ref{cor: bounded}~(iii) proves \citep[Conjecture~4]{sedaghat2020bounded}. In Corollary~\ref{cor: bounded}~(iii) and~(iv), the condition $m \Ge 3$ cannot be relaxed. Under $Q_-$, the orbit of $2^1 - 1 = 1$ reaches $0$, and $2^2 - 1 = 3$ is a fixed point. Under $Q_+$, the number $2^1 + 1 = 3 = 2^2 - 1$ lies on the cycle $3 \to 6 \to 3$, and the orbit of $2^2 + 1 = 5$ is bounded by Theorem~\ref{thm: plus}~(iii). Under $S$, every number $2^m \pm 1$ is a terminal number, so Corollary~\ref{cor: bounded}~(iii) and~(iv) have no counterpart for $S$.

\section{A divisibility criterion for \texorpdfstring{$Q_-$}{Q-}}
\label{sec: criterion}

The proof of Theorem~\ref{thm: minus} relies on Lemma~\ref{lem: szalay}. In this section, we give a criterion for unbounded orbits under $Q_-$ that uses only divisibility. For an odd integer $n > 1$, let $\ord_n(2)$ be the multiplicative order of $2$ modulo $n$, the least $d \Ge 1$ with $2^d \equiv 1 \pmod n$. For an integer $d \Ge 1$, let $v_2(d)$ be its $2$-adic valuation, the exponent of the largest power of $2$ dividing $d$.

\begin{theorem}
\label{thm: criterion}
    \begin{enumerate}[(i)]
        \setlength{\itemsep}{0pt}
        \item Let $x_0 \Ge 1$, and let $y > 1$ be an odd term of its orbit $(x_i)_{i \Ge 0}$ under $Q_-$. If $y$ divides none of the numbers $2^m + 1$ with $m \Ge 1$, then $x_i \to \infty$ as $i \to \infty$.
        \item An odd integer $y > 1$ divides $2^m + 1$ for some $m \Ge 1$ if and only if there is $s \Ge 1$ with $v_2(\ord_p(2)) = s$ for every prime $p$ dividing $y$.
    \end{enumerate}
\end{theorem}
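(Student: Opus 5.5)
For part (i) I would argue without Szalay's result, using only Lemma~\ref{lem: orbit}. The aim is to show that no later odd term is a terminal number, so that the last clause of Lemma~\ref{lem: orbit}~(ii) gives $x_i \to \infty$. Let $y > 1$ be an odd term and let $n$ be any later term of the orbit. The argument of Lemma~\ref{lem: orbit}~(i) starts from the hypothesis that $y$ divides the current term, and it does not need $y$ to be the first odd part. So induction from the position of $y$ shows that $y$ divides every later term, and in particular every later odd term. Suppose some later odd term, or $y$ itself, equals a terminal number $2^m + 1$. Then $y$ divides $2^m + 1$, which contradicts the hypothesis. Hence from $y$ onward no odd term is terminal. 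Restarting the orbit at $y$ and applying Lemma~\ref{lem: orbit}~(ii) with initial value $y$, whose odd part is $y > 1$, gives $x_i \to \infty$. The terms before $y$ do not matter. One detail needs care: the terms $y_r$ of the restarted orbit are at least $3$. This holds because each of them is divisible by $y \Ge 3$.

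For part (ii) I would work prime by prime and then use the Chinese remainder theorem. Let $d = \ord_n(2)$ for an odd $n > 1$. First I would show that $n$ divides $2^m + 1$ for some $m$ if and only if $d$ is even, and that the admissible $m$ are then exactly the odd multiples of $d/2$. If $2^m \equiv -1 \pmod n$, then $2^{2m} \equiv 1$, so $d \mid 2m$. Also $d \nmid m$, since $-1 \not\equiv 1$ because $n > 2$. Together these force $d$ to be even and $m = (d/2)\cdot t$ with $t$ odd. Conversely, $2^{d/2}$ is a square root of $1$ different from $1$. For prime powers $p^e$ this already gives $2^{d/2} \equiv -1$, because the group $(\mathbb Z/p^e)^\times$ is cyclic. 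For general $n$ this step fails, and that is why the criterion is stated in terms of primes.

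Next I would handle a general $y = \prod p_i^{e_i}$. Here $y \mid 2^m + 1$ if and only if $p_i^{e_i} \mid 2^m + 1$ for every $i$. By the prime power case, this holds if and only if each $d_i = \ord_{p_i^{e_i}}(2)$ is even and $m$ is an odd multiple of $d_i/2$ for all $i$ simultaneously. An integer $m$ is an odd multiple of $d_i/2$ exactly when $v_2(m) = v_2(d_i) - 1$ and the odd part of $d_i$ divides $m$. Such an $m$ exists for all $i$ at once if and only if all the $v_2(d_i)$ are equal to a common value $s \Ge 1$. In that case one can take $m = \operatorname{lcm}(d_i)/2$.

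Finally I would relate these orders to the primes themselves. It is standard that $\ord_{p^e}(2) = \ord_p(2)\cdot p^{c}$ for some $c \Ge 0$, since the kernel of the reduction map $(\mathbb Z/p^e)^\times \to (\mathbb Z/p)^\times$ is a $p$-group. Because $p$ is odd, $v_2(\ord_{p^e}(2)) = v_2(\ord_p(2))$. This reduces the condition to $v_2(\ord_p(2)) = s$ for every prime $p \mid y$, which is the statement. I expect the main obstacle to be the prime power step, namely $-1$ being the unique nontrivial square root of $1$ there, together with this lifting of the order. Both are textbook facts, and I would cite them.
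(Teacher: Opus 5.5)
Your proof is correct and follows essentially the same route as the paper. For (i) you restart the orbit at $y$ and apply Lemma~\ref{lem: orbit}. For (ii) you pass to prime powers, impose the local condition on $v_2(m)$ and on the odd part of $\ord_{p^e}(2)$, and use $v_2(\ord_{p^e}(2)) = v_2(\ord_p(2))$. The only difference is that you obtain $2^{d/2} \equiv -1 \pmod{p^e}$ from the cyclicity of $(\mathbb Z/p^e\mathbb Z)^\times$, whereas the paper uses the more elementary fact that $2^m - 1$ and $2^m + 1$ differ by $2$, so $p$ cannot divide both.
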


\begin{proof}
    Let $i_0$ be an index with $x_{i_0} = y$. Lemma~\ref{lem: orbit}~(i), applied to the orbit of $y$, shows that $y$ divides $x_i$ for every $i \Ge i_0$. If $x_i$ does not tend to infinity, Lemma~\ref{lem: orbit}~(ii), applied to the orbit of $y$, gives an index $i_1 \Ge i_0$ and an integer $m \Ge 1$ with $x_{i_1} = 2^m + 1$. Then $y$ divides $2^m + 1$, which contradicts the assumption on $y$. This proves (i).

    For (ii), let $p$ be a prime dividing $y$, and let $p^e$ be the largest power of $p$ dividing $y$. We show that, for every $m \Ge 1$,
    \begin{equation*}
        2^m \equiv -1 \pmod{p^e} \quad \Longleftrightarrow \quad \ord_{p^e}(2) \text{ divides } 2m \text{ and not } m .
    \end{equation*}
    If $2^m \equiv -1 \pmod{p^e}$, then $2^{2m} \equiv 1$ and $2^m \not\equiv 1$ modulo $p^e$. This gives the forward implication. Conversely, assume that $\ord_{p^e}(2)$ divides $2m$ and not $m$. Then $p^e$ divides the product $(2^m - 1)(2^m + 1)$ but not $2^m - 1$. These factors differ by $2$, so $p$ does not divide both, and $p^e$ divides $2^m + 1$. Equivalently, the condition on $\ord_{p^e}(2)$ reads
    \begin{equation}
        v_2(m) = v_2 \bigl( \ord_{p^e}(2) \bigr) - 1 \quad \text{and} \quad \text{the odd part of } \ord_{p^e}(2) \text{ divides } m .
        \label{eq: local condition}
    \end{equation}
    Moreover, $v_2(\ord_{p^e}(2)) = v_2(\ord_p(2))$. Indeed, $\ord_p(2)$ divides $\ord_{p^e}(2)$, since $2^{\ord_{p^e}(2)} \equiv 1 \pmod p$. By the binomial theorem, if an integer is congruent to $1$ modulo $p^f$ with $f \Ge 1$, its $p$-th power is congruent to $1$ modulo $p^{f+1}$. Applying this $e - 1$ times, starting from $2^{\ord_p(2)} \equiv 1 \pmod p$, gives
    \begin{equation*}
        2^{\ord_p(2) \, p^{e-1}} \equiv 1 \pmod{p^e} ,
    \end{equation*}
    so $\ord_{p^e}(2)$ divides $\ord_p(2) \, p^{e-1}$. The quotient $\ord_{p^e}(2) / \ord_p(2)$ therefore divides the odd number $p^{e-1}$, and the $2$-adic valuations agree.

    Now $y$ divides $2^m + 1$ if and only if \eqref{eq: local condition} holds for every prime $p$ dividing $y$. If such an $m$ exists, then $v_2(\ord_p(2)) = v_2(m) + 1$ for every prime $p$ dividing $y$. Conversely, assume that $v_2(\ord_p(2)) = s \Ge 1$ for every such $p$. Let $m$ be $2^{s-1}$ times the least common multiple of the odd parts of the orders $\ord_{p^e}(2)$. Then $m$ satisfies \eqref{eq: local condition} for every $p$. This proves (ii).
\end{proof}

We now apply Theorem~\ref{thm: criterion} to the conjectures of \citet{sedaghat2020bounded}. For $m \Ge 3$, the residues of the powers of $2$ modulo $2^m - 1$ are $2^0, 2^1, \dots, 2^{m-1}$, and none of them is congruent to $-1$, since $2^{m-1} < 2^m - 2$. Hence $2^m - 1$ divides no number $2^{m'} + 1$, and Theorem~\ref{thm: criterion}~(i) applies to every odd term $2^m - 1$. This proves Corollary~\ref{cor: bounded}~(iii) without Lemma~\ref{lem: szalay}. With $x_0 = 7 = 2^3 - 1$, Theorem~\ref{thm: criterion}~(i) also proves that unbounded orbits exist, which is \citep[Conjecture~3]{sedaghat2020bounded}. In \citep{sedaghat2020dual}, both conjectures are derived from the classification stated there.

Without Lemma~\ref{lem: szalay}, Theorem~\ref{thm: criterion} also shows that the initial values with a bounded orbit under $Q_-$ have natural density $0$, that is, only $o(X)$ of them lie in $\{0, 1, \dots, X\}$. This was announced in \citep{sedaghat2020dual}. Let $p \equiv 7 \pmod 8$ be prime. By Euler's criterion and the supplementary law of quadratic reciprocity for $2$ \citep[Theorem~12.1~(i) and~(iv)]{shoup2008computational}, $2^{(p-1)/2} \equiv 1 \pmod p$. Since $(p-1)/2$ is odd, $v_2(\ord_p(2)) = 0$, and by Theorem~\ref{thm: criterion}~(ii) no odd multiple of $p$ divides a number $2^m + 1$. By Theorem~\ref{thm: criterion}~(i), applied to the odd part of $x_0$, every $x_0 \Ge 1$ with a prime factor $p \equiv 7 \pmod 8$ has an unbounded orbit.

For a finite set $P$ of primes $p \equiv 7 \pmod 8$, inclusion--exclusion shows that the number of $x_0 \in \{1, \dots, X\}$ divisible by no $p \in P$ is
\begin{equation}
    X \prod_{p \in P} \Bigl( 1 - \frac 1p \Bigr) + O \bigl( 2^{|P|} \bigr) .
    \label{eq: sieve}
\end{equation}
By the prime number theorem for arithmetic progressions \citep[Theorem~5.21]{shoup2008computational}, the number of primes $p \Le X$ with $p \equiv 7 \pmod 8$ is asymptotic to $X / (4 \log X)$ as $X \to \infty$, and partial summation shows that the sum of $1/p$ over these primes diverges. Hence the product $\prod_{p \in P} (1 - 1/p)$ in \eqref{eq: sieve} can be made arbitrarily small by enlarging $P$. Every $x_0 \Ge 1$ with a bounded orbit is divisible by no $p \in P$. So, for every such $P$, the number of these initial values in $\{1, \dots, X\}$ is at most \eqref{eq: sieve}, and therefore this number is $o(X)$. Corollary~\ref{cor: bounded}~(i) gives the much stronger bound $(\log_2 X + 1)^2$, but its proof uses Lemma~\ref{lem: szalay}.

The criterion does not carry over to $Q_+$ and $S$. Under $Q_+$, Lemma~\ref{lem: orbit} shows that a bounded orbit of $x_0 = 2^l q$ with $q > 1$ enters a cycle that contains a number $2^m - 1$ divisible by $q$. The analogue of Theorem~\ref{thm: criterion}~(i) for $Q_+$ would state that an orbit with an odd term $y > 1$ that divides none of the numbers $2^m - 1$ tends to infinity. But every odd $y > 1$ divides $2^{\ord_y(2)} - 1$, so this statement applies to no orbit. Under $S$, the numbers $(y-1)/2$ and $(y+1)/2$ are coprime to an odd $y > 1$, so $y$ does not divide $S(y)$, and Lemma~\ref{lem: orbit}~(i) has no analogue.

The proof of Theorem~\ref{thm: plus} uses Lemma~\ref{lem: key} only through (ii), which concerns $\delta = -1$ and hence needs only Lemma~\ref{lem: szalay}~(ii). According to \citet[Section~1]{scott2006elementary}, Szalay's proof of Lemma~\ref{lem: szalay}~(ii) uses a non-elementary bound of Beukers, and replacing this bound by an earlier elementary theorem of Beukers makes the proof elementary. Hence Theorem~\ref{thm: plus} has an elementary proof. The proof of Theorem~\ref{thm: S} uses both parts of Lemma~\ref{lem: szalay}.

\section{Conclusion}
\label{sec: conclusion}

We leave the following questions open.

Under $Q_-$, does some initial value not covered by Theorem~\ref{thm: minus}~(i) and~(ii) have an orbit in which every odd term divides some number $2^m + 1$? By Theorem~\ref{thm: criterion}~(i), an elementary proof that no such initial value exists would give an elementary proof of Theorem~\ref{thm: minus}~(iii). Is there an elementary proof that \eqref{eq: key} has no solution with $\varepsilon = \delta = 1$? Such a proof would make Theorem~\ref{thm: minus} elementary.

\section*{Data availability}

The results of this paper and Lemma~\ref{lem: szalay} are verified in Lean~4 with the Mathlib library. The verification takes the results of Beukers used by \citet{szalay2002equations} as axioms. The Lean code is available at \url{https://github.com/xuda-ye-math/Collatz}. The Lean code is written with the assistance of Claude (Opus 5.5).

\bibliographystyle{plainnat}
\bibliography{references}

\end{document}